\documentclass{amsart}

\input epsf
%
%
\def\relabelbox{%
  \hbox\bgroup%
}%
\def\endrelabelbox{%
}%
\def\relabel #1#2 {%
  \special{ps:/a {} def}%
  \smash{\rlap{#2}}%
}%
\def\adjustrelabel <#1,#2> #3#4 {%
  \special{ps:/a {} def}%
  \smash{\rlap{\kern #1 \raise #2\hbox{#4}}}%
}%
\def\extralabel <#1,#2> #3 {\smash{\rlap{\kern #1 \raise #2\hbox{#3}}}}%

\usepackage{epsfig}


\def\co{\colon\thinspace}
\newcommand{\R}{\mathbb{R}}
\newcommand{\Z}{\mathbb{Z}}
\newcommand{\rot}{{\tt rot}}
\newcommand{\ogamma}{\overline{\gamma}}
\newcommand{\bfoo}{{\mathbf 0}}


\newtheorem{thm}{Theorem}

\newtheorem{prop}[thm]{Proposition}

\theoremstyle{definition}
\newtheorem*{rem}{Remark}


\begin{document}

\title[The Whitney--Graustein Theorem]{A contact geometric proof
of\\the Whitney--Graustein Theorem}

\author{Hansj\"org Geiges}

\address{Mathematisches Institut, Universit\"at zu K\"oln,
Weyertal 86--90, 50931 K\"oln, F.R.G.}

\email{geiges@math.uni-koeln.de}


\begin{abstract}
The Whitney--Graustein theorem states that regular closed curves in the
$2$-plane are classified, up to regular homotopy, by their rotation number.
Here we give a simple proof based on contact geometry.
\end{abstract}

\maketitle

\section{Introduction}
A {\bf regular closed curve} in the $2$-plane is a continuously 
differentiable map
$\ogamma\co [0,2\pi ]\rightarrow\R^2$ with the following properties:
\begin{itemize}
\item[(i)] $\ogamma (0)=\ogamma (2\pi),\;\; \ogamma'(0)=\ogamma'(2\pi )$,
\item[(ii)] $\ogamma'(s)\neq \bfoo$ for all $s\in [0,2\pi ]$.
\end{itemize}
If we identify the circle $S^1$ with $\R /2\pi \Z$, we may think
of $\ogamma$ as a continuously differentiable map $S^1\rightarrow\R^2$.

The {\bf rotation number} $\rot (\ogamma )$ of $\ogamma$ is the
degree of the map
\[ \begin{array}{ccc}
S^1 & \longrightarrow & \R^2\setminus\{\bfoo\}\\
s   & \longmapsto     & \ogamma'(s).
\end{array} \]
In other words, $\rot (\ogamma )$ is simply a signed count of the
number of complete turns of the velocity vector $\ogamma'$
as we once traverse the closed curve~$\ogamma$, see
Figure~\ref{figure:immersions}.

\begin{figure}[h]
\centerline{\relabelbox\small
\epsfxsize 8cm \epsfbox{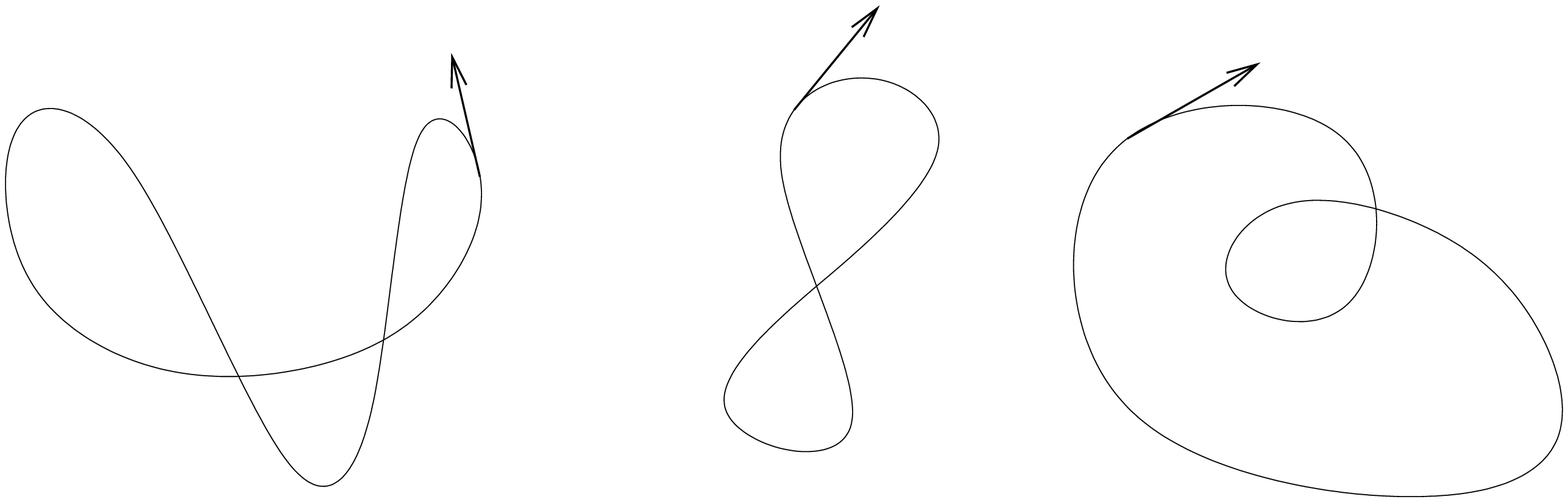}
\endrelabelbox}
\caption{Regular closed curves $\ogamma$ with $\rot (\ogamma )$ equal to
$1$, $0$, $-2$, respectively.}
\label{figure:immersions}
\end{figure}

A {\bf regular homotopy} between two such regular closed
curves $\ogamma_0,\ogamma_1$ is a continuously differentiable
homotopy via regular closed
curves $\ogamma_t\co S^1\rightarrow\R^2$, $t\in [0,1]$.
The rotation number clearly stays invariant under regular homotopies.
The following theorem is commonly known as the Whitney--Graustein theorem.
It was first proved in a paper by H.~Whitney~\cite{whit37},
who writes: `This theorem, together with its proof, was suggested to me by
W.~C.~Graustein.' For alternative presentations see
\cite[Chapter~0]{adac93} or \cite[p.~47 {\it et seq.}]{geig03}.

\begin{thm}
\label{thm:WG}
Regular homotopy classes of regular closed
curves $\ogamma\co S^1\rightarrow\R^2$
are in one-to-one correspondence with the integers, the correspondence
being given by $[\ogamma ]\mapsto\rot (\ogamma )$.
\end{thm}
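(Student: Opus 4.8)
The plan is to leave the rigid world of planar curves and reformulate everything inside the space of contact elements $M:=\R^2\times S^1$, with coordinates $(x,y,\theta)$ and the standard contact form $\alpha=\sin\theta\, dx-\cos\theta\, dy$. Writing $\xi=\ker\alpha$, the contact plane at each point is spanned by $\partial_\theta$ and the horizontal field $X_\theta:=\cos\theta\,\partial_x+\sin\theta\,\partial_y$, since $\alpha(\partial_\theta)=\alpha(X_\theta)=0$. The point of this move is that the two features of a regular closed curve that are awkward to control directly, namely the regularity $\ogamma'\neq\bfoo$ and the invariance of $\rot$, both become transparent after lifting.

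To a regular closed curve $\ogamma$ I would associate the loop $\widetilde{\gamma}\co S^1\rightarrow M$ defined by $\widetilde{\gamma}(s)=(\ogamma(s),\theta(s))$, where $\theta(s)$ is the argument of $\ogamma'(s)$; regularity guarantees that this is well defined and continuous. By construction $\ogamma'(s)=r(s)\bigl(\cos\theta(s),\sin\theta(s)\bigr)$ with $r(s)=|\ogamma'(s)|>0$, whence $\alpha(\widetilde{\gamma}')\equiv 0$, so that $\widetilde{\gamma}$ is a Legendrian loop whose $X_\theta$-component $r$ is strictly positive; call such a loop a \emph{positive Legendrian loop}. Since $\pi_1(M)=\pi_1(S^1)=\Z$ is generated by a fibre $\{pt\}\times S^1$ and the $\R^2$-factor is contractible, the free homotopy class of $\widetilde{\gamma}$ is exactly the winding number of $\theta$, i.e. $[\widetilde{\gamma}]=\rot(\ogamma)\in\Z$. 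This re-proves invariance of $\rot$ and, together with the model curves $\ogamma_n(s)=(\cos ns,\sin ns)$ for $n\neq 0$ and a figure eight for $n=0$, disposes of surjectivity. Next I would check the equivalence of homotopies: a regular homotopy $\ogamma_t$ lifts to a homotopy $\widetilde{\gamma}_t$ through positive Legendrian loops, and conversely any homotopy through positive Legendrian loops projects under $(x,y,\theta)\mapsto(x,y)$ to a regular homotopy, because the projected velocity $r_t(\cos\theta_t,\sin\theta_t)$ is nonzero precisely by positivity. Thus the theorem reduces to the contact-geometric claim that two positive Legendrian loops are homotopic through positive Legendrian loops if and only if they lie in the same free homotopy class of $M$.

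For the remaining injectivity I would aim to deform $\widetilde{\gamma}$ to the standard lift $\widetilde{\gamma}_n$ of $\ogamma_n$. The angular map $\theta\co S^1\rightarrow S^1$ has degree $n$, hence is homotopic to the model angular map; I would lift a chosen homotopy $\theta_t$ and reconstruct the planar curves from the freedom in the positive speed function $r_t$ via
\[
\ogamma_t(s)=c_t+\int_0^s r_t(u)\bigl(\cos\theta_t(u),\sin\theta_t(u)\bigr)\,du .
\]
The main obstacle is exactly the one that makes the classical argument delicate, namely that the loop must close up: this amounts to $\oint r_t(\cos\theta_t,\sin\theta_t)\,du=\bfoo$, two real constraints on the positive function $r_t$. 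This is solvable precisely when the directions $e^{i\theta_t(s)}$ are not confined to a closed half-plane, which is automatic for $n\neq 0$ (there $\theta_t$ is surjective) but a genuine constraint to be respected throughout the homotopy when $n=0$. I expect the contact viewpoint to pay off here: positivity and the Legendrian condition are open conditions, and one has the contactomorphisms of $(M,\xi)$ induced by the rotations, translations, and scalings of $\R^2$ at one's disposal, so the closure condition can be decoupled from the straightening of $\theta$ rather than fought against as in the planar picture. Pushing the $n=0$ case through the half-plane obstruction is where the essential work lies; once $\widetilde{\gamma}$ has been connected to $\widetilde{\gamma}_n$ through positive Legendrian loops, the correspondence $[\ogamma]\mapsto\rot(\ogamma)$ is injective, and the theorem follows.
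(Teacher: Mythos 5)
Your reductions are all correct as far as they go: the lift to the space of contact elements $M=\R^2\times S^1$, the identification of the free homotopy class of $\widetilde{\gamma}$ with $\rot(\ogamma)$ (hence invariance), surjectivity via the model curves, and the equivalence between regular homotopies of planar curves and homotopies through positive Legendrian loops. But the theorem is not proved, because the one statement carrying all the content --- that two positive Legendrian loops in the same free homotopy class of $M$ can be joined \emph{through} positive Legendrian loops --- is only sketched, and the sketch stalls exactly where Whitney's elementary proof is delicate. Concretely: (a) even for $n\neq 0$, after straightening the angle maps you must produce the positive weights $r_t$ satisfying the closure constraint \emph{continuously in} $t$; the constraint is solvable for each fixed $t$ (since degree $n\neq 0$ forces $\theta_t$ to be surjective), but you give no continuous selection --- this is fixable (the solution set is a convex set varying continuously with $\theta_t$, so one can take, say, the solution nearest to the constant function $1$), yet it needs to be said; and (b) for $n=0$ you must construct a homotopy $\theta_t$ from the given angle map to that of the figure eight through degree-zero maps whose images are never confined to a closed half-plane, i.e.\ you must prove that this space of angle maps is path-connected. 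You explicitly defer this (``where the essential work lies''), but it is not a loose end: it \emph{is} the theorem in the case $\rot=0$, and nothing in your setup makes it easier. The Legendrian condition in $M$ is only a bookkeeping device recording the tangent direction; the closure condition $\oint r_t e^{i\theta_t}\,du=\bfoo$ remains a nonlocal integral constraint on $(r_t,\theta_t)$, untouched by the contact reformulation. In effect your proposal is Whitney's classical argument (straighten the Gauss map, then repair closure) in contact clothing, with the hard part missing.

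It is instructive to compare why the paper's contact-geometric detour does succeed. The paper lifts into $(\R^3,\ker(dz+x\,dy))$, the planar curve being the \emph{Lagrangian} projection (after a preliminary regular homotopy achieving $\oint x\,dy=0$), and then performs all deformations in the \emph{front} projection, where the closure problem evaporates: any closed front without vertical tangencies lifts to a closed Legendrian curve, because the missing coordinate $x=-dz/dy$ is determined pointwise by the front rather than by an integral. That is what the contact geometry buys there: the classification upstairs becomes a finite combinatorial problem --- cusps labelled by orientation, cancelled in pairs by Legendrian Reidemeister-I moves, counted by $\rot(\gamma)=\frac{1}{2}(c_--c_+)$ --- with no case distinction at $\rot=0$ and no half-plane obstruction anywhere. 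Your unit-cotangent-bundle lift admits no analogous projection, so to complete your argument you would have to redo Whitney's analytic work on the closure constraint; if you want a genuinely contact-geometric proof, the front projection, not the contact-element lift, is the mechanism that does the work.
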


Whitney's proof is elementary, but not without intricacies.
Here I want to
present a nonelementary proof --- based on contact geometry ---
where the geometric ideas are actually quite simple.

\begin{rem}
The modern terminology `regular homotopy' describes what
Whitney called a `deformation' of regular closed curves.
He seems to suggest, erroneously, that it is enough to require that
$\gamma_t(s)$ be continuous in $s$ and $t$ and a regular
closed curve for each fixed~$t$, but in the course of his
argument it becomes clear that he wants $\gamma'_t(s)$
to depend continuously on $t$ as well. Figure~\ref{figure:non-reghomotopy}
shows a homotopy of regular closed curves (first traverse the
big circle counter-clockwise,
then the small circle) with $\rot (\gamma_t)=2$ for
$t\in [0,1)$, but $\rot (\gamma_1)=1$.
\end{rem}

\begin{figure}[h]
\centerline{\relabelbox\small
\epsfxsize 8cm \epsfbox{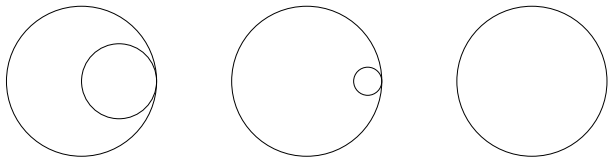}
\endrelabelbox}
\caption{A homotopy through regular closed curves with
$\rot$ not invariant.}
\label{figure:non-reghomotopy}
\end{figure}

\subsubsection*{Acknowledgement.}
The idea for the proof presented here
was inspired by a conversation with Yasha Eliashberg.
\section{Legendrian curves}
\label{section:Legendrian}
The {\bf standard contact structure} $\xi$ on $\R^3$, see
Figure~\ref{figure:ex1} (produced by Stephan Sch\"onenberger),
is the $2$-plane field
$\xi=\ker (dz+x\, dy)$. For a brief introduction to
contact geometry see~\cite{geig01}. No knowledge of contact geometry beyond
the concepts that I shall introduce explicitly will be required for
the argument that follows.

\begin{figure}[h]
\centerline{\relabelbox\small
\epsfysize 2truein \epsfbox{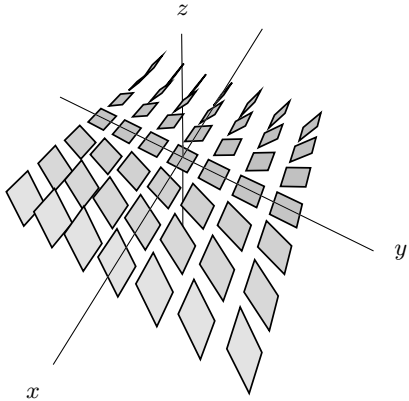}
\extralabel <-4.8cm,.1cm> {$x$}
\extralabel <.1cm,2.0cm> {$y$}
\extralabel <-2.8cm,5.2cm> {$z$}
\endrelabelbox}
\caption{The contact structure $\xi =\ker (dz+x\, dy)$.}
\label{figure:ex1}
\end{figure}

A regular closed, continuously differentiable
curve $\gamma\co S^1\rightarrow (\R^3 ,\xi )$ is
called {\bf Legendrian} if it is everywhere tangent to~$\xi$, that is,
$\gamma'(s)\in\xi_{\gamma (s)}$ for all $s\in S^1$.
When we write $\gamma$ in terms of coordinate functions
as $\gamma (s)=\bigl( x(s),y(s),z(s)\bigr)$, the condition for $\gamma$
to be Legendrian becomes $z'+xy'\equiv 0$.
The {\bf front projection} of $\gamma$ is the planar curve
\[ \gamma_{\mathrm F}(s)=\bigl( y(s),z(s)\bigr) ;\]
its {\bf Lagrangian projection}, the curve
\[ \gamma_{\mathrm L}(s)=\bigl( x(s),y(s)\bigr) .\]
Figure~\ref{figure:frontLag} shows the front and Lagrangian projection
of a Legendrian unknot in~$\R^3$.

\begin{figure}[h]
\centerline{\relabelbox\small
\epsfxsize 8cm \epsfbox{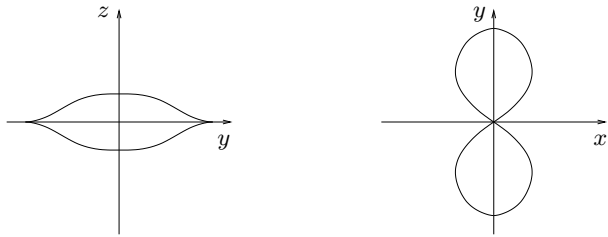}
\extralabel <-6.9cm,2.9cm> {$z$}
\extralabel <-1.9cm,2.9cm> {$y$}
\extralabel <-5.3cm,1.2cm> {$y$}
\extralabel <-0.3cm,1.2cm> {$x$}
\endrelabelbox}
\caption{A Legendrian unknot.}
\label{figure:frontLag}
\end{figure}

Notice that a Legendrian curve $\gamma$ can be recovered from
its front projection $\gamma_{\mathrm F}$, since
\[ x(s)=-\frac{z'(s)}{y'(s)}=-\frac{dz}{dy}\]
is simply the negative slope of the front projection. (Of course this
only makes sense for $y'(s)\neq 0$. Generically, the zeros of the
function $y'(s)$ are isolated, corresponding to isolated cusp points
where $\gamma_{\mathrm F}$ still has a well-defined slope.) Since $x(s)$ is
always finite, $\gamma_{\mathrm F}$ does not have any vertical
tangencies, and we can sensibly speak of left and right cusps.
These cusps are `semi-cubical'; a model is given by
$\bigl( x(s),y(s),z(s)\bigr) = (s,s^2/2,-s^3/3)$.

Likewise, $\gamma$ can be recovered from its Lagrangian projection
$\gamma_{\mathrm L}$
(unique up to translation in the $z$-direction),
for the missing coordinate $z$ is given by
\[ z(s_1)= z(s_0)-\int_{s_0}^{s_1}x(s)y'(s)\, ds.\]
Observe that the integral $\int xy'\, ds=\int x\, dy$, when
integrating over a closed curve, measures the oriented area enclosed by
that curve. Moreover, the Lagrangian projection $\gamma_{\mathrm L}$ of
a regular Legendrian curve $\gamma$ is always regular: if $y'(s)=0$,
the Legendrian condition forces $z'(s)=0$, and then the regularity of
$\gamma$ gives $x'(s)\neq 0$.

The idea for the proof of Theorem~\ref{thm:WG} is now the following.
Given a (regular closed) Legendrian curve $\gamma$ in $(\R^3,\xi )$,
one can assign to it an invariant (under Legendrian regular homotopies,
i.e.\ regular homotopies via Legendrian curves). This invariant
is likewise called `rotation number'. In fact, the
rotation number of $\gamma$  will be seen to
equal the rotation number of its Lagrangian projection~$\gamma_{\mathrm L}$.
Alternatively, the rotation number of $\gamma$ can be computed
from its front projection $\gamma_{\mathrm F}$, where it becomes
a simple combinatorial quantity (a count of cusps). Now, given
two regular closed curves $\ogamma_0,\ogamma_1$ in the plane with
equal rotation number, we can consider their lifts to Legendrian
curves $\gamma_0,\gamma_1$ (still with equal rotation number), and in the
front projection we can now `see', in a combinatorial way, a 
Legendrian regular homotopy between them. The Lagrangian projection of this 
Legendrian regular homotopy will give us the regular homotopy between
$\ogamma_0$ and $\ogamma_1$.

\section{The rotation number}

The plane field $\xi$ is spanned by the globally defined vector fields
$e_1=\partial_x$ and $e_2=\partial_y-x\,\partial_z$.
In terms of the trivialisation
of $\xi$ defined by these vector fields, we may regard the map $\gamma'$
(coming from a regular closed Legendrian curve $\gamma$) as a map
\[ \begin{array}{ccc}
S^1 & \longrightarrow & \R^2\setminus\{\bfoo\}\\
s   & \longmapsto     & \gamma'(s).
\end{array} \]
The {\bf rotation number} $\rot (\gamma )$ of a
Legendrian curve $\gamma$
is the degree of that map. This means that $\rot (\gamma )$ 
counts the number of rotations of the velocity vector $\gamma'$
relative to the oriented basis $e_1,e_2$
of $\xi$ as we go once around~$\gamma$. The rotation number is clearly
an invariant of Legendrian regular homotopies.

Under the projection $(x,y,z)\mapsto (x,y)$, each $2$-plane
$\xi_{\gamma (s)}$ maps isomorphically onto~$\R^2$, and the basis
$e_1,e_2$ for $\xi_{\gamma (s)}$ is mapped to the standard basis
$\partial_x,\partial_y$ for~$\R^2$. So
the following is immediate from the definitions.

\begin{prop}
\label{prop:rot-rot}
The rotation number of a (regular closed) Legendrian curve in
$(\R^3,\xi )$ equals the rotation number of its Lagrangian projection.
\qed
\end{prop}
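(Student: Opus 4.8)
The plan is to prove Proposition~\ref{prop:rot-rot} directly from the definitions, exploiting the fact that the Lagrangian projection already appears as the defining data for the rotation number of $\gamma$. Recall that $\rot(\gamma)$ is defined as the degree of the map $s\mapsto\gamma'(s)$, where $\gamma'(s)$ is expressed in the trivialisation $e_1=\partial_x$, $e_2=\partial_y-x\,\partial_z$ of $\xi$. The key observation is that this degree is unaffected by the isomorphism of $\xi_{\gamma(s)}$ onto $\R^2$ induced by $(x,y,z)\mapsto(x,y)$.

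First I would write the velocity vector of a Legendrian curve $\gamma(s)=(x(s),y(s),z(s))$ in the basis $e_1,e_2$. Since $\gamma'=(x',y',z')$ and the Legendrian condition gives $z'=-xy'$, one checks that $\gamma'(s)=x'(s)\,e_1+y'(s)\,e_2$, because the $e_2$-component contributes $y'\partial_y-xy'\partial_z$, matching the $z$-component exactly. Hence, in the $(e_1,e_2)$-trivialisation, the map whose degree is $\rot(\gamma)$ is simply $s\mapsto(x'(s),y'(s))$.

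Next I would compare this with the rotation number of the Lagrangian projection. By definition, $\gamma_{\mathrm L}(s)=(x(s),y(s))$, so $\gamma_{\mathrm L}'(s)=(x'(s),y'(s))$, and $\rot(\gamma_{\mathrm L})$ is the degree of $s\mapsto\gamma_{\mathrm L}'(s)$ into $\R^2\setminus\{\bfoo\}$. This is literally the same map as the one computed above, so the two degrees coincide. The remark in the excerpt that the projection $(x,y,z)\mapsto(x,y)$ carries the basis $e_1,e_2$ to the standard basis $\partial_x,\partial_y$ is precisely the conceptual statement underlying this identification.

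I do not expect any genuine obstacle here; the content is really a matter of unwinding definitions and verifying the one linear-algebra identity $\gamma'=x'e_1+y'e_2$. The only point requiring a word of care is that $\gamma_{\mathrm L}$ is indeed a regular curve, so that its rotation number is well defined — but this is already established in the preceding discussion (if $y'(s)=0$ then the Legendrian condition forces $z'(s)=0$, whence regularity of $\gamma$ gives $x'(s)\neq\bfoo$, so $\gamma_{\mathrm L}'(s)\neq\bfoo$). With that in hand, the proof is immediate, which is why the statement is marked \qed in the excerpt.
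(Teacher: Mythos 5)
Your proof is correct and is essentially the argument the paper itself gives: the paper's observation that the projection $(x,y,z)\mapsto(x,y)$ carries the trivialising basis $e_1,e_2$ of $\xi$ to the standard basis $\partial_x,\partial_y$ is exactly your identity $\gamma'=x'e_1+y'e_2$, which makes the two degree maps literally coincide. You have merely written out explicitly the computation the paper declares ``immediate from the definitions,'' including the worthwhile check that $\gamma_{\mathrm L}$ is regular.
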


A little more work is required to read off $\rot (\gamma )$ from the 
front projection~$\gamma_{\mathrm F}$. This, however, is well worth the effort, because
it turns the rotation number into a simple combinatorial quantity.

\begin{prop}
Let $\gamma$ be a (regular closed) Legendrian curve in $(\R^3,\xi)$.
Write $\lambda_+$ or $\lambda_-$, respectively, for the number of
left cusps of the front
projection $\gamma_{\mathrm F}$ oriented upwards or downwards; similarly
we write $\rho_{\pm}$ for the number of right cusps with
one or the other orientation. Finally, we write $c_{\pm}$
for the total number of cusps oriented upwards or downwards,
respectively. Then the
rotation number of $\gamma$ is given by
\[ \rot (\gamma )  =  \lambda_- -\rho_+ = \rho_- -\lambda_+ =
\frac{1}{2}(c_- - c_+).\]
\end{prop}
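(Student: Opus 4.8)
The plan is to compute $\rot(\gamma)$ directly as a winding number and then reinterpret its crossings combinatorially as cusps. First I would observe that, since the Legendrian condition gives $z'=-xy'$, one has $\gamma'=x'e_1+y'e_2$ exactly; hence the map whose degree defines $\rot(\gamma)$ is simply $s\mapsto\bigl(x'(s),y'(s)\bigr)\in\R^2\setminus\{\bfoo\}$. Thus $\rot(\gamma)$ is the winding number of the velocity of the Lagrangian projection, in agreement with Proposition~\ref{prop:rot-rot}, and I shall read this winding number off from the front.

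The key geometric observation is that this velocity vector is horizontal, i.e.\ lies on the $x'$-axis, exactly at the cusps of $\gamma_{\mathrm F}$: if $y'(s_0)=0$, then finiteness of the slope $x=-z'/y'$ forces $z'(s_0)=0$, and regularity of $\gamma$ then gives $x'(s_0)\neq 0$; conversely, these are precisely the points where $\gamma_{\mathrm F}$ has a (semicubical) cusp. After a small Legendrian perturbation, which does not change $\rot(\gamma)$, I may assume all cusps are nondegenerate, $y''(s_0)\neq 0$, so that they are isolated and finite in number. I would then compute the degree of the associated map $S^1\to S^1$ as the signed number of preimages of the regular value $(1,0)$: these preimages are the cusps with $x'(s_0)>0$, and the local sign of each is the sign of $\dot\theta=(x'y''-y'x'')/\bigl((x')^2+(y')^2\bigr)$, which at a cusp reduces to $\operatorname{sign}\bigl(x'(s_0)\,y''(s_0)\bigr)$.

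It remains to match these signs with the combinatorial cusp data, and this local normal-form analysis is the part requiring the most care. In the semicubical model one checks that $y''(s_0)>0$ means $y$ has a local minimum, i.e.\ a \emph{left} cusp, while $y''(s_0)<0$ gives a right cusp; and, comparing the two branches at a fixed value of $y$, that $x'(s_0)y''(s_0)>0$ is exactly the condition for the curve to pass \emph{downward} through the cusp (the outgoing branch lying below the incoming one). Hence the preimages of $(1,0)$ are precisely the left-downward cusps (local sign $+1$) and the right-upward cusps (local sign $-1$), giving $\rot(\gamma)=\lambda_- -\rho_+$; repeating the count with the regular value $(-1,0)$, whose preimages are the cusps with $x'(s_0)<0$, gives $\rot(\gamma)=\rho_- -\lambda_+$. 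Adding these two expressions yields $2\rot(\gamma)=(\lambda_-+\rho_-)-(\lambda_++\rho_+)=c_- -c_+$, which is the final equality. The main obstacle is thus purely local: pinning down, from the cubic term of the cusp, the precise dictionary between $\operatorname{sign}\bigl(x'(s_0)\bigr)$ and the upward/downward orientation, independently of the slope $x(s_0)$ of the front at the cusp.
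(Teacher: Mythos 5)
Your proposal is correct and takes essentially the same route as the paper: both compute $\rot(\gamma)$ as the signed count of passages of the velocity vector through $e_1=\partial_x$ (and then through $-e_1$), identify those passages with the left-downward and right-upward (resp.\ left-upward and right-downward) cusps, and obtain the third formula by averaging. Your write-up simply makes explicit, via the formula for $\dot\theta$ and the semicubical normal form, the local sign analysis that the paper carries out pictorially.
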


\begin{proof}
The rotation number $\rot (\gamma )$
can be computed by counting (with sign) how often the velocity
vector $\gamma'$ crosses $e_1=\partial_x$ as we travel once along~$\gamma$.

Since $x(s)$ equals the negative slope of the front projection, points
of $\gamma$ where the (positive) tangent vector equals $\partial_x$
are exactly the left cusps oriented downwards (see
Figure~\ref{figure:leftnegcusp}) and the right cusps oriented upwards.

\begin{figure}[h]
\centerline{\relabelbox\small
\epsfxsize 5cm \epsfbox{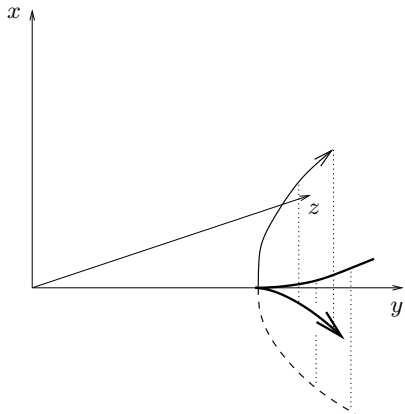}
\extralabel <-0.3cm,1.4cm> {$y$}
\extralabel <-5.4cm,5.3cm> {$x$}
\extralabel <-1.4cm,2.7cm> {$z$}
\endrelabelbox}
\caption{Contribution of a cusp to $\rot (\gamma )$.}
\label{figure:leftnegcusp}
\end{figure}

At a left cusp oriented downwards, the tangent vector to $\gamma$, expressed in
terms of $e_1,e_2$, changes from having a negative component in the
$e_2$-direction to a positive one, i.e.\ such a cusp yields a
positive contribution to $\rot (\gamma )$. Analogously, one sees that
a right cusp oriented upwards gives a negative contribution to
the rotation number. This proves the formula
$\rot (\gamma )=\lambda_- - \rho_+$.
The second expression for the rotation number is obtained by
counting crossings through $-e_1$ instead; the third expression
is found by averaging the first two.
\end{proof}
\section{Proof of the Whitney--Graustein theorem}
First we give a classification of regular closed Legendrian curves up to
Legendrian regular homotopy.

\begin{prop}
Legendrian regular homotopy classes of regular closed Legendrian curves
$\gamma\co S^1\rightarrow (\R^3,\xi )$
are in one-to-one correspondence with the integers, the
correspondence being given by $[\gamma ]\mapsto\rot (\gamma )$.
\end{prop}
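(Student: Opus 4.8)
The map $[\gamma]\mapsto\rot(\gamma)$ is well defined because the rotation number is invariant under Legendrian regular homotopy, as already observed; so the content of the statement is surjectivity and injectivity. I would treat both through the front projection, exploiting the cusp formula $\rot(\gamma)=\frac{1}{2}(c_--c_+)$. For surjectivity one simply exhibits, for each $n\in\Z$, an explicit standard front $\gamma_n$: for $n=0$ the front of the Legendrian unknot (one left and one right cusp of opposite orientation), and for $n\neq 0$ a front carrying $2|n|$ cusps of one and the same ($z$-)orientation, so that $c_--c_+=2n$. Verifying $\rot(\gamma_n)=n$ is then just reading off the cusp count.

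For injectivity I would show that every regular closed Legendrian curve is Legendrian regularly homotopic to the model $\gamma_n$ with $n=\rot(\gamma)$. First put the front in generic position: an immersed planar curve with no vertical tangencies, finitely many semicubical cusps (necessarily an even number, alternating between left and right as one traverses the curve), and transverse double points. The engine of the argument is a pair of moves, each realised by a Legendrian regular homotopy. The first is that double points of the front are irrelevant: two strands may be pushed through one another freely. This is the conceptual heart of the proof and is special to \emph{regular} homotopy --- at the instant of a front self-tangency the two strands share both $(y,z)$ and slope, hence coincide in $\R^3$, but such a momentary self-intersection of $\gamma$ is harmless, since a regular homotopy need only pass through immersions, not embeddings. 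The second move is the birth or death of a cusp pair: a ``zig-zag'' formed by a left and a right cusp of \emph{opposite} orientation can be retracted into a smooth monotone strand (and conversely), a genuine regular homotopy that leaves $c_--c_+$, and hence $\rot$, unchanged.

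With these moves the reduction becomes combinatorial. Reading off the cyclic sequence of cusp orientations around the front, whenever both up- and down-cusps occur there must be two consecutive cusps of opposite orientation; I would slide one of them along the curve, passing other strands by crossing changes, until it sits next to the other and forms a cancellable zig-zag, then remove the pair. Iterating drives $\min(c_+,c_-)$ to $0$ (to $1$ when $n=0$, since a closed front must retain at least one maximum and one minimum of $y$), leaving precisely $2|n|$ equally oriented cusps; a final round of crossing changes and ambient regular homotopy then standardises the disentangled front to $\gamma_n$. I expect the main obstacle to sit exactly in this last stage: confirming with care that passing strands through one another and sliding cusps are bona fide Legendrian regular homotopies, introducing no vertical tangency or forbidden degeneracy en route, and that the fully reduced, disentangled front with $2|n|$ like-oriented cusps is genuinely unique up to Legendrian regular homotopy --- that is, that $\rot$ is a complete invariant and no subtler quantity survives the reduction.
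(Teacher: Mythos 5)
Your outline is essentially the paper's own proof: work in the front projection, use the cusp formula $\rot(\gamma)=\frac{1}{2}(c_--c_+)$, note that self-tangencies of the front are merely self-intersections of the Legendrian curve (so crossing changes are free under \emph{regular} homotopy), cancel an adjacent left--right pair of oppositely oriented cusps by the first Legendrian Reidemeister move, and reduce the cyclic sequence of cusp orientations combinatorially; surjectivity via standard fronts with $2|n|$ like-oriented cusps is also exactly the paper's construction. So the ideas are all the right ones, and correctly ordered.

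The step you flag as the ``main obstacle'' is, however, a genuine part of the proof, and the paper resolves it by running the normalization \emph{before} the combinatorics rather than after: it shows that, up to Legendrian regular homotopy, $\gamma$ is completely determined by the cyclic sequence of its cusp labels. The argument is to homotope $\gamma_{\mathrm F}$ (crossings being irrelevant) so that all left cusps lie on the line $\{y=0\}$ and all right cusps on $\{y=1\}$, then shuffle the cusps along these lines into the order in which they are traversed; this produces a standard model depending only on the label sequence, which simultaneously legitimizes your ``sliding'' of cusps and gives the uniqueness of the reduced, disentangled front. One further case is missing from your reduction: when $\rot(\gamma)=0$ the sequence reduces to $(+,-)$ or $(-,+)$, and these two must still be identified. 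The paper does this explicitly --- a first Reidemeister move followed by the inverse of the $180^{\circ}$-rotated move shows the two resulting curves are even Legendrian isotopic. With the normalization lemma and this last identification supplied, your argument closes up and coincides with the paper's.
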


\begin{proof}
With the help of either of the two foregoing propositions one
can construct a regular closed Legendrian curve $\gamma$
with $\rot (\gamma )$ equal to any prescribed integer. Thus,
we need only show that two regular closed Legendrian curves
$S^1\rightarrow (\R^3,\xi )$
with the same rotation number are Legendrian regularly homotopic.

In the front projection of the Legendrian
immersion~$\gamma$, left and right cusps alternate. We label the up-cusps
with $+$ and the down-cusps with~$-$. Up to Legendrian regular homotopy,
$\gamma$ is completely determined by the sequence of these labels,
starting at a right-cusp, say, and going once around~$S^1$. 
This can be seen by homotoping $\gamma_{\mathrm F}$ so that
all left cusps come to lie on the line
$\{ y=0\}$ and all right cusps on the line $\{ y=1\}$, say. The cusps on
either line can be shuffled by further homotopies; in particular,
they may be arranged in the same order along these lines in
which they are traversed along the closed Legendrian
curve. Figure~\ref{figure:model}
shows a standard model for a front $\gamma_{\mathrm F}$ containing
cusps of one sign only.

\begin{figure}[h]
\centerline{\relabelbox\small
\epsfxsize 5cm \epsfbox{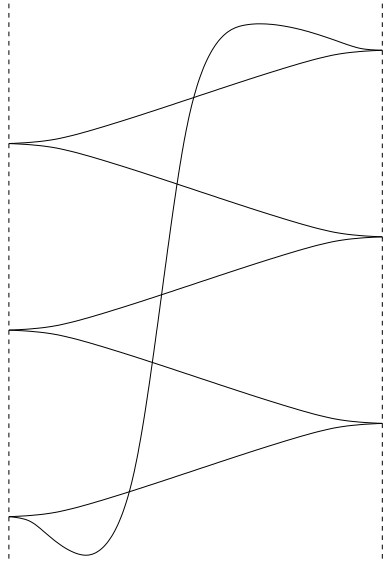}
\endrelabelbox}
\caption{A front with cusps of one sign only.}
\label{figure:model}
\end{figure}

Moreover, a pair $+-$ or $-+$ can be cancelled from this sequence by a
Legendrian regular homotopy; locally this is in fact achieved by
a Legendrian isotopy, i.e.\ a regular homotopy not
creating self-intersections: the so-called
first Legendrian Reidemeister move
(see Figure~\ref{figure:reidemeister1}; there is an analogous move with
the picture rotated by $180^{\circ}$).

\begin{figure}[h]
\centerline{\relabelbox\small
\epsfxsize 6cm \epsfbox{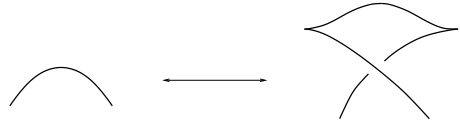}
\endrelabelbox}
\caption{The first Legendrian Reidemeister move.}
\label{figure:reidemeister1}
\end{figure}

Therefore, this sequence of labels can be reduced to a sequence containing
only plus or only minus signs, or to one of the sequences $(+,-)$,
$(-,+)$; see Figure~\ref{figure:reghomotopy3} for
an example. The formula $\rot (\gamma )=(c_- - c_+)/2$
shows that there are the following
possibilities: if $\rot (\gamma)$ is positive (resp.\ negative),
we must have a sequence of $2\rot (\gamma)$ minus (resp.\ plus) signs;
if $\rot (\gamma )=0$, we must have the sequence $(+,-)$ or $(-,+)$.
The proof is completed by observing that these last two sequences
correspond to Legendrian isotopic knots: use a first
Reidemeister move as in Figure~\ref{figure:reidemeister1}, followed
by the inverse of the rotated move.
\end{proof}

\begin{figure}[h]
\centerline{\relabelbox\small
\epsfxsize 8cm \epsfbox{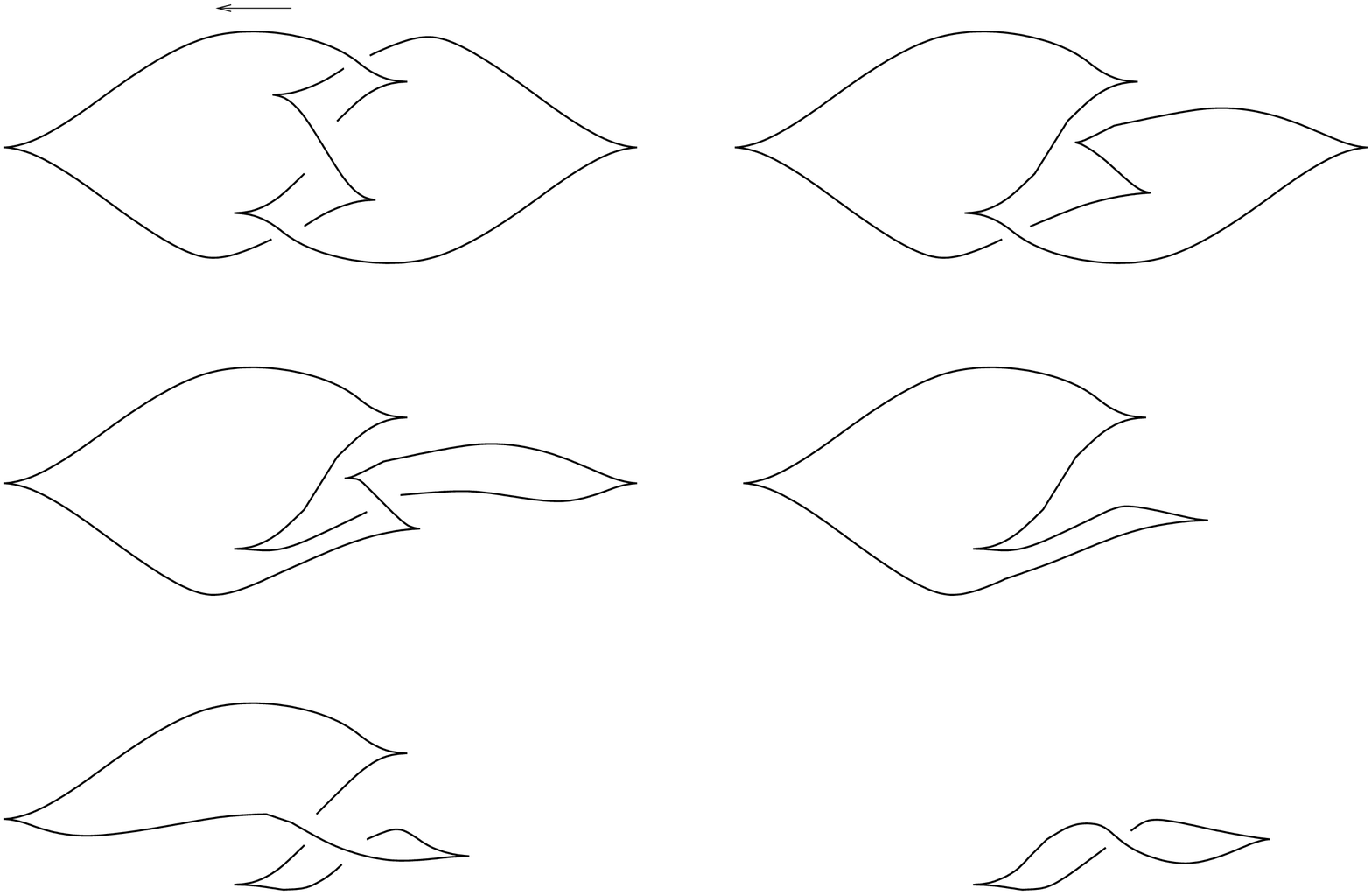}
\extralabel <-8.0cm,5.1cm> {(i)}
\extralabel <-8.0cm,3.2cm> {(iii)}
\extralabel <-8.0cm,1.1cm> {(v)}
\extralabel <-3.4cm,5.1cm> {(ii)}
\extralabel <-3.4cm,3.2cm> {(iv)}
\extralabel <-3.4cm,1.1cm> {(vi)}
\endrelabelbox}
\caption{An example of a Legendrian regular homotopy.}
\label{figure:reghomotopy3}
\end{figure}

\begin{rem}
Self-tangencies in the front projection $\gamma_{\mathrm F}$ correspond to
self-intersections of the Legendrian curve~$\gamma$, since the
negative slope of $\gamma_{\mathrm F}$ gives the $x$-component of~$\gamma$.
Therefore, as we pass such a self-tangency in the moves of
Figure~\ref{figure:reghomotopy3}, we effect a crossing change.
With the orientation indicated in the figure, this
example has $\rot (\gamma ) = -1$.
\end{rem}

\begin{proof}[Proof of Theorem \ref{thm:WG}]
Again we only have to show that
two regular closed curves $\ogamma_0,\ogamma_1\co S^1\rightarrow\R^2$ 
(where we think of $\R^2$ as the $(x,y)$-plane) with
$\rot (\ogamma_0)=\rot (\ogamma_1 )$ are regularly homotopic.

After a regular homotopy we may assume that the $\ogamma_i$ satisfy
the area condition $\oint_{\ogamma_i}x\, dy =0$ and thus lift to
regular {\em closed} Legendrian curves $\gamma_i\co S^1\rightarrow
(\R^3,\xi )$ with --- by Proposition~\ref{prop:rot-rot} ---
$\rot (\gamma_i)=\rot (\ogamma_i)$. By the preceding proposition,
$\gamma_0$ and $\gamma_1$ are Legendrian regularly homotopic. The
Lagrangian projection of this homotopy gives a regular homotopy
between the curves $\ogamma_0$ and~$\ogamma_1$, since --- as
pointed out in Section~\ref{section:Legendrian} --- the Lagrangian
projection of a regular Legendrian curve is regular.
\end{proof}


\begin{thebibliography}{99}
%
\bibitem{adac93}
{\sc Adachi, M.}
{\it Embeddings and Immersions}.
Transl. Math. Monogr. 124.
American Mathematical Society, Providence, 1993.
%
\bibitem{geig01}
{\sc Geiges, H.}
A brief history of contact geometry and topology.
{\it Expo. Math. 19\/} (2001), 25--53.
%
\bibitem{geig03}
{\sc Geiges, H.}
$h$-principles and flexibility in geometry.
{\it Mem. Amer. Math. Soc. 164\/} (2003), no.~779.
%
\bibitem{whit37}
{\sc Whitney, H.}
On regular closed curves on the plane.
{\it Compositio Math. 4\/} (1937), 276--284.
\end{thebibliography}
\end{document}